\newtheorem{theorem}{Theorem}[section]
\newtheorem{lemma}[theorem]{Lemma}
\theoremstyle{definition}
\newtheorem{definition}[theorem]{Definition}
\theoremstyle{remark}
\newtheorem{remark}[theorem]{Remark}
\def\innprod#1#2{\left<#1,#2\right>}
\numberwithin{equation}{section}
\begin{document}
\title{SHARP UPPER BOUND FOR THE FIRST EIGENVALUE}
\author{Binoy}
\address{Department of Mathematics and Statistics, Indian Institute of Technology, Kanpur 208016, India}
\email{binoy@iitk.ac.in}
\author{G. Santhanam}
\address{Department of Mathematics and Statistics, Indian Institute of Technology, Kanpur 208016, India}
\email{santhana@iitk.ac.in}
\thanks{First author is supported by a research fellowship of CSIR, India.}

\subjclass[2000]{Primary 43A85; Secondary 22E30}
\keywords{Rank-1 symmetric spaces, first eigenvalue}

\begin{abstract} 
 Let $M$ be a closed hypersurface in a noncompact rank-1 symmetric space $(\overline{\mathbb{M}}, ds^2)$ with $-4 \leq K_{\overline{\mathbb{M}}}
\leq -1,$ or in a complete, simply connected Riemannian manifold $\mathbb{M}$ such that $0 \leq K_{\mathbb{M}} \leq \delta^2$ or
$K_{\mathbb{M}} \leq k$ where $k = -\delta^2$ or $0$. In this paper we give sharp upperbounds for the first eigenvalue of laplacian of $M$.
\end{abstract}

\maketitle
\section{Introduction}\label{intro}
Let $(\overline{M}, g)$ be a complete Riemannian manifold of dimension $n \geq 2,$ and $M$ be a closed hypersurface. 
Starting with the work of Bleecker-Weiner \cite{bw}, there have been several works which give sharp upper bound for
the first eigen value $\lambda_1(M)$ of laplacian on $M$. In \cite{bw}, they proved that for  a hypersurface $M$ of $\mathbb{R}^{n+1}$, the first eigenvalue 
of $M$ is bounded above by 
$ \frac{1}{\text{Vol}(M)}\int_M |A|^2$, where $|A|^2$ is the square of the length of the second fundamental form of $M$. Reilly \cite{r} 
proved that if $M$ is a compact $n$-dimensional manifold which is isometrically immersed in $\mathbb{R}^{n+p} $, then $\lambda_1(M) \leq 
 \frac{n}{\text{Vol}(M)}\int_M |H|^2,$ where $H$ is the mean curvature vector.
This result was later extended in various ways to submanifolds of simply connected space forms (\cite{gjf}, \cite{eh}). 
If $M$ is hypersruface in a rank-one symmetric spaces, it was proved in \cite{gs} that $ \lambda_1(M) \leq \frac{1}{\text{Vol}(M)}\int_M \lambda_1(S(r)), $ where $\lambda_1(S(r)) $ is the first eigen 
value of the geodesic sphere in the ambient space with center at the center of mass of $M$ corresponding to the mass distribution function $\frac{1}{t}$.
All the above inequalities are sharp as equality holds if and only if the hypersurface $M$ is a geodesic sphere. 

For a closed hypersruface $M$ which is contained in a ball of radius less than $\frac{i(\mathbb{M}(k))}{4},$ and 
bounding a convex domain $\Omega$ in the simply connected space form $\mathbb{M}(k),\, k= $ 0 or 1, the second author proved \cite{gs1} that
$$
\frac{\lambda_1(M)}{\lambda_1(S(R))} \leq \frac{Vol(M)}{Vol(S(R))}
$$
where $R$ is such that $Vol(B(R)) = Vol(\Omega)$. A similar result was also obtained for $k = -1$.
Furthermore equality holds if and only if $M$ is a geodesic sphere of radius $R$.

In this paper we extend the results in \cite{gs1} to a wider class of Riemannian manifolds.
We denote by $K_M$, the sectional curvature of a Riemannian manifold $M$. 
We consider noncompact rank-1 symmetric space $(\overline{\mathbb{M}}, ds^2)$ with the metric $ds^2$ 
such that $-4 \leq K_{\overline{\mathbb{M}}} \leq -1$ or complete,
simply connected Riemannian manifold $\mathbb{M}$ such that $0 \leq K_{\mathbb{M}} \leq \delta^2,$ or $K_{\mathbb{M}} \leq k,$ where 
$ k = -\delta^2$ or $0$. 
Let $M$ be a closed hypersurface in $\mathbb{M}$ or $\overline{\mathbb{M}}$. In the case of $0 \leq K_{\mathbb{M}} \leq \delta^2$, we prove the following 
isoperimetric upper bound   
$$
\frac{\lambda_1(M)}{\lambda_1(S_{\delta}(R))} \leq \frac{Vol(M)}{Vol(S_{\delta}(R))} 
$$
where $S_{\delta}(R)$ is the  geodesic sphere of radius $R$ in the constant curvature space $\mathbb{M}(\delta^2)$(See theorem \ref{thm1} for the statement). 
We obtain similar isoperimetric upperbounds in the other cases also (see theorem \ref{thm2} and \ref{thm3} for statements).
These upper bounds are sharp and equality holds if and only if the hypersurface is a geodesic sphere. 

We refer to \cite{ic} and \cite{dc} for the basic Riemannian geometry used in this paper.
\section{Statement of Results}\label{state}
To state the results we need the notion of center of mass of a subset of a Riemannian manifold.   	 

Let $(\overline{M},g)$ be a $(n+1)$ dimensional complete Riemannian manifold. For a point $p \in \overline{M}$, we denote by $c(p)$ the
convexity radius of $(\overline{M},g)$ at $p$.
For a subset $A \subset B(q,c(q)) $, for $q \in \overline{M}$, we let $CA$ denote the convex hull of $A$. Let $exp_q:T_q\overline{M} \rightarrow \overline{M}$ 
be the exponential map and
$ X = (x_1, x_2, ... , x_{n+1})$ be the normal coordinate system at $q$. We identify $CA$ with $exp_q^{-1}(CA)$ and denote $g_q(X,X)$ as 
$\parallel \! X \! \parallel_q^2$ for $X \in T_q\overline{M}$. We state the center of mass theorem below.
\begin{theorem}
Let $A$ be a measurable subset of $(\overline{M},g)$ contained in $B(q_0, c(q_0))$ for some point $q_0 \in \overline{M}$. 
Let $G: [0, 2c(q_0)] \rightarrow \mathbb{R}$ be a 
continuous function such that $G$ is positive on $(0, 2c(q_0))$. Then there exists a point $p \in CA$ such that 
$$
\int_AG(\parallel \!X \!\parallel_p)XdV = 0,
$$
where $X = (x_1, x_2, ... , x_{n+1})$ is a geodesic normal coordinate system at $p$.
\end{theorem}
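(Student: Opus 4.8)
The plan is to realize the desired identity as the vanishing of the gradient of a natural energy functional and then to locate a zero at a minimizer. For $p \in CA$ and $x \in A$ write $X_p = \exp_p^{-1}(x) \in T_p\overline{M}$, so that $\|X_p\|_p = d(p,x)$ by the Gauss lemma, and set $V(p) = \int_A G(\|X_p\|_p)\,X_p\,dV \in T_p\overline{M}$. The conclusion of the theorem is precisely that $V(p)=0$ for some $p \in CA$. To produce such a $p$, define $g:[0,2c(q_0)) \to \mathbb{R}$ by $g(r) = \int_0^r sG(s)\,ds$ and put $E(p) = \int_A g(d(p,x))\,dV(x)$. Since every $x \in A$ and every $p \in CA$ lie in $B(q_0,c(q_0))$, we have $d(p,x) < 2c(q_0)$, so the integrand is well defined. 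Using the standard formula $\mathrm{grad}_p\,d(p,x) = -X_p/\|X_p\|_p$ together with $g'(r)=rG(r)$, differentiation under the integral sign yields
\[
\mathrm{grad}_p E = \int_A g'(d(p,x))\,\mathrm{grad}_p d(p,x)\,dV = -\int_A G(\|X_p\|_p)\,X_p\,dV = -V(p),
\]
so it suffices to find a critical point of $E$ on $CA$.

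Because $\overline{M}$ is complete, the closed ball $\overline{B(q_0,c(q_0))}$ is compact, and the (closed) convex hull $CA$ is a compact convex subset of the totally convex ball $B(q_0,c(q_0))$. The function $E$ is continuous, hence attains its minimum at some $p_0 \in CA$. The only analytic point needing care is that $E$ is $C^1$: the map $p \mapsto d(p,x)$ is smooth away from $p=x$, and although $\mathrm{grad}_p\,d(p,x)$ is merely a unit vector there, the weight $g'(r)=rG(r)$ vanishes at $r=0$, so the integrand $g'(d(p,x))\,\mathrm{grad}_p d(p,x)$ extends continuously across the diagonal and dominated convergence justifies differentiating under the integral.

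It remains to show that minimality forces $V(p_0)=0$, and here convexity of $CA$ is the key. Let $T_{p_0}(CA) \subset T_{p_0}\overline{M}$ denote the tangent cone of $CA$ at $p_0$, a closed convex cone. For each $x \in A \subset CA$, convexity of $CA$ guarantees that the minimizing geodesic $t \mapsto \exp_{p_0}(tX_{p_0})$, $t\in[0,1]$, stays in $CA$, so $X_{p_0}$ is a feasible direction and therefore $X_{p_0}\in T_{p_0}(CA)$. Consequently $V(p_0)=\int_A G(\|X_{p_0}\|_{p_0})\,X_{p_0}\,dV$, being a limit of nonnegative combinations of elements of the closed convex cone $T_{p_0}(CA)$, itself lies in $T_{p_0}(CA)$. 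On the other hand, the first-order minimality condition for $E$ on $CA$ reads $\langle \mathrm{grad}_{p_0}E, w\rangle \ge 0$, that is $\langle V(p_0), w\rangle \le 0$, for every $w \in T_{p_0}(CA)$. Taking $w=V(p_0)$ gives $\|V(p_0)\|^2 \le 0$, whence $V(p_0)=0$, which is the assertion.

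The main obstacle is the geometric step of the last paragraph: one must know that, within the convexity radius, convexity of $CA$ in the Riemannian sense genuinely places each $X_{p_0}$ in the tangent cone, which is exactly where total convexity of $B(q_0,c(q_0))$ and the resulting convexity of $CA$ are used. The remaining ingredients (the gradient identity and the $C^1$ regularity of $E$) are routine once the singularity at the diagonal is controlled by $g'(0)=0$.
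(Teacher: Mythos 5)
The paper does not actually prove this theorem; it only cites \cite{gs} and \cite{eh} for the proof, and the argument in those references is precisely the variational one you give (minimize $E(p)=\int_A g(d(p,x))\,dV$ with $g'(r)=rG(r)$ over the compact convex hull and use the first-order condition at the minimizer). Your write-up is correct; the one step worth streamlining is the final paragraph, where instead of invoking convexity and closedness of the tangent cone to place $V(p_0)$ inside it, you can simply test the variational inequality $\langle V(p_0),w\rangle\le 0$ against each individual feasible direction $w=X_{p_0}(x)$, $x\in A$, and integrate against the nonnegative weight $G$ to get $\|V(p_0)\|^2\le 0$ directly.
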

For a proof see \cite{gs} or \cite{eh}.
\begin{definition}
 The point $p$ in the above theorem is called as a center of mass of the measurable subset $A$ with respect to the mass distribution function $G$.
\end{definition}

Before stating the results, we fix some notations which will be used throughout the paper.
Let $(\overline{\mathbb{M}}, ds^2)$ be a noncompact rank-1 symmetric space with the metric $ds^2$ such that the sectional curvature
satisfies $-4 \leq K_{\overline{\mathbb{M}}} \leq -1$. Let the dimension of $(\overline{\mathbb{M}}, ds^2)$ be  $kn,$ where $k \, 
 = \,  dim_{\mathbb{R}}\mathbb{K}; \,\mathbb{K} = \mathbb{R}, \mathbb{C}, \mathbb{H}$ or $\mathbb{C}a$. Fix a point $p \in \overline{\mathbb{M}}$
and let $\gamma$ be a geodesic starting at $p$. Then the volume density function along $\gamma$ at the point 
$\gamma(r)$ is given by $sinh^{kn-1}r\,cosh^{k-1}r.$ Also we denote by $S(r),$ the geodesic sphere of radius $r$ with center $p$ and by
$\Delta_{S(r)},$ the Laplacian of $S(r)$. Let $\lambda_1(S(r))$ be the first eigenvalue of $\Delta_{S(r)}$. It is well known (\cite{eh}, \cite{gs})
that for $r > 0$
$$
\lambda_1(S(r))= \frac{kn-1}{\sinh^2r} - \frac{k-1}{cosh^2r}.
$$  

For a given $\delta > 0,$ let $\mathbb{M}$ denote a complete, simply connected Riemannian manifold of dimension $(n+1)$ such that the sectional 
curvature satisfies one of the following conditions: 
\begin{enumerate}
\item 
$0 \leq K_{\mathbb{M}} \leq \delta^2$ 
\item 
$K_{\mathbb{M}} \leq 0 $
\item 
$K_{\mathbb{M}} \leq  -\delta^2.$
\end{enumerate} 
Also we denote by $\mathbb{M}(k),$ the simply connected space 
form of dimension $(n+1)$ with constant curvature $ k = \pm \delta^2$ or $0.$ 
For $r\geq 0$, let  
$$
\sin_{\delta}r = 
\begin{cases}
 \frac{1}{\delta}\sin\delta \,r & if \, \, 0 \leq K \leq \delta^2 \\
 r & if \, K \leq 0 \\ 
 \frac{1}{\delta}\sinh\delta \,r & if \, K \leq -\delta^2   
\end{cases} \ \ 
\text{and} \ \ 
\cos_{\delta}r = 
\begin{cases}
 \cos\delta \,r & if \, \, 0 \leq K \leq \delta^2 \\
 1 & if \,  K \leq 0 \\ 
 \cosh\delta \,r & if \, K \leq -\delta^2.  
\end{cases}
$$

Let $M$ be a closed hypersurface in $\mathbb{M} $ or in $ \overline{\mathbb{M}}$ and $\Omega$ be a bounded domain whose boundary is $M$.
In the case of $\mathbb{M}$ with $0 \leq K_{\mathbb{M}} \leq \delta^2$, we always assume that $M$ is contained in a ball of radius less 
than min$(\frac{\pi}{4\delta}, inj(\mathbb{M}))$.  
Let $p \in C\Omega$ be a center of mass corresponding to the function $\frac{\sin_{\delta}r}{r}.$
The geodesic polar coordinate system centered at $p$ is denoted by $(r,u)$ where $r> 0$ and $u \in U_p\mathbb{M}(\text{or}\, U_p{\overline{\mathbb{M}}}).$
For any $q \in M$, let $\gamma_q$ be the unique unit speed geodesic segment joining $p$ and $q$ with $ \gamma_q^{\prime}(0) = u.$ 
We write $d(p,q)$ as $t_q(u)$. Consider $W \subset T_p\mathbb{M}$ such that $\Omega = exp_p(W)$. Fix a point $p_0 \in \mathbb{M}(k)$ 
and an isometry $i:T_p\mathbb{M} \rightarrow T_{p_0}\mathbb{M}(k)$. Let $\Omega_{\delta} = exp_{p_0}(i(W)),M_{\delta} = 
\partial \Omega_{\delta}$ and for $\bar{q} \in M_{\delta}$ we write $d(p_0, \bar{q}) = t_{\bar{q}}(\bar{u})$ where $\bar{u}$ is the tangent 
at $p_0$ of the unit speed geodesic segment $\gamma_{\bar{q}}$ joining between $p_0 $ and $\bar{q}$. We also denote by $\phi$ and $\phi_{\delta},$
the volume density functions of $\mathbb{M}$ and $\mathbb{M}(k)$ along the radial geodesics starting at $p$ and $p_0$ respectively.

With these notations we state the main results. 

\begin{theorem}\label{thm1}
Let $\mathbb{M}$ be a complete, simply connected $(n+1)$ dimensional manifold such that $0 \leq K_{\mathbb{M}} \leq \delta^2$ or $K_{\mathbb{M}} \leq 0$.
Let $M$ be a closed hypersurface in $\mathbb{M}$ which encloses a bounded region $\Omega$. Then 
$$
\frac{\lambda_1(M)}{\lambda_1(S_{\delta}(R))} \leq \frac{Vol(M)}{Vol(S_{\delta}(R))} 
$$
where $R > 0$ is such that $Vol(\Omega_{\delta}) = Vol(B_{\delta}(R))$; here $B_{\delta}(R)$ and $S_{\delta}(R)$ are the geodesic ball and geodesic sphere 
respectively of radius $R$ in the constant curvature space $\mathbb{M}(k),$ where $ k = \delta^2$ or $0$.

Further, the equality holds if and only if $M$ is a geodesic sphere in $\mathbb{M}$ and $\Omega$ is isometric to $B_{\delta}(R)$. 
\end{theorem}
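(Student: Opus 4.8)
The plan is to use the variational (min–max) characterization of $\lambda_1(M)$ together with a carefully chosen test function built from the radial coordinate, and to compare the resulting Rayleigh quotient on $M$ with the one on the model sphere $S_\delta(R)$ via curvature comparison. The standard strategy in this circle of results (Bleecker--Weiner, Reilly, \cite{gs1}) is: first normalize by the center of mass so that the natural candidate test functions are admissible, then transplant a first eigenfunction of $S_\delta(R)$ (or its defining coordinate function) onto $M$, and finally estimate numerator and denominator of the Rayleigh quotient separately.

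Concretely, I would proceed as follows. First, choose $p \in C\Omega$ to be the center of mass of $\Omega$ with respect to the mass distribution $\frac{\sin_\delta r}{r}$, guaranteed by the center of mass theorem stated above; this choice is exactly what makes the vector-valued function whose components resemble first eigenfunctions of the model sphere have vanishing integral against the appropriate weight, so that each component is a legitimate test function orthogonal to the constants. Using $\lambda_1(M)\int_M f^2 \le \int_M |\nabla f|^2$ for each admissible component $f$ and summing over components, one reduces the inequality to comparing $\int_M |\nabla^M \Phi|^2$ with $\int_M \Phi^2$, where $\Phi$ is built from $\cos_\delta$ and $\sin_\delta$ of the radial distance $t_q(u)$ from $p$. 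The curvature hypothesis enters through Hessian/second-variation comparison for the radial distance function: the upper bound $K \le \delta^2$ (resp. $K\le 0$) controls the model functions from one side, while the volume density comparison between $\phi$ and $\phi_\delta$ along radial geodesics lets one pass from integrals over $M$ and $\Omega$ to integrals over $M_\delta$ and $\Omega_\delta$, and finally to the genuine sphere $S_\delta(R)$ chosen by the volume-matching condition $Vol(\Omega_\delta)=Vol(B_\delta(R))$.

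The main obstacle I expect is the bookkeeping that converts the comparison on the model-space image $\Omega_\delta$ into the sharp isoperimetric form involving the true ball $B_\delta(R)$ of the \emph{same volume}. One must show that the Rayleigh quotient of the transplanted function is dominated by the corresponding quantity for $S_\delta(R)$; this requires monotonicity of the relevant integrands as functions of the radial variable together with the fact that symmetrizing $\Omega_\delta$ to the round ball $B_\delta(R)$ of equal volume can only decrease the eigenvalue bound. Controlling the tangential part of the gradient $\nabla^M \Phi$ versus the full gradient $\nabla^{\mathbb{M}}\Phi$, and ensuring the cross terms have the right sign under the two distinct curvature regimes ($0\le K\le\delta^2$ and $K\le 0$ treated uniformly via the $\sin_\delta,\cos_\delta$ notation), is where the analysis is most delicate.

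For the equality case, the plan is to trace back through each inequality used. Equality in the eigenvalue estimate forces the transplanted functions to be actual eigenfunctions of $M$; equality in the Hessian comparison forces $K$ to equal the relevant constant curvature along all radial geodesics from $p$; and equality in the volume density comparison forces $\phi \equiv \phi_\delta$, so that $\Omega$ is isometric to $\Omega_\delta$, which in turn must be the round ball $B_\delta(R)$. Combining these rigidity statements shows $M$ is a geodesic sphere and $\Omega\cong B_\delta(R)$, as claimed.
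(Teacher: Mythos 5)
Your plan follows essentially the same route as the paper's proof: normalizing at the center of mass for the weight $\frac{\sin_{\delta}r}{r}$, using the transplanted coordinate functions $\sin_{\delta}r\cdot\frac{x_i}{r}$ as test functions in the Rayleigh quotient, bounding the gradient term by Rauch/Jacobi-field comparison, bounding $\int_M\sin_{\delta}^2 d(p,q)\,dm$ from below by $Vol(S_{\delta}(R))\sin_{\delta}^2R$ via G\"unther's volume comparison plus the equal-volume monotone rearrangement, and obtaining rigidity by tracing equality through each step. The ``delicate sign of the cross terms'' you flag is resolved in the paper exactly as you anticipate, by showing $\sum_i\bigl(\frac{\partial f_i}{\partial\eta}\bigr)^2\geq\frac{\|\nabla^M r\|^2}{r^2}$ and using $\tan\delta r\geq\delta r$, so the approach is sound.
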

For the case $ K \leq -\delta^2,$ we have  
\begin{theorem}\label{thm2}
Let $\mathbb{M}$ be a complete, simply connected $(n+1)$ dimensional manifold such that $ K \leq -\delta^2$. Let $M$ be a closed
hypersurface in $\mathbb{M}$ which encloses the bounded region $\Omega$. Then 
$$
\frac{\lambda_1(M)}{\lambda_1(S_{\delta}(R))} \leq \frac{Vol(M)}{Vol(S_{\delta}(R))}\ +
  \frac{1}{n\,Vol(S_{\delta}(R))}\int_M \parallel \nabla^M\sin_{\delta}r \parallel^2 $$
where $R>0$ is such that $Vol(\Omega_{\delta}) = Vol(B_{\delta}(R))$; here $B_{\delta}(R)$ and $S_{\delta}(R)$ are the geodesic ball and 
geodesic sphere respectively of radius $R$ in the constant curvature space $\mathbb{M}(-\delta^2)$. 

Further,the equality holds if and only if $M$ is a geodesic sphere and $\Omega$ is isometric to $B_{\delta}(R)$.
\end{theorem}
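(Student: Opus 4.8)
The plan is to bound $\lambda_1(M)$ from above by a family of coordinate-type test functions and then convert the resulting estimate into the stated isoperimetric form by comparing an integral over $M$ with the corresponding integral over the model sphere. Since $\mathbb{M}$ is simply connected with $K\le-\delta^2$, the Cartan--Hadamard theorem guarantees that $\exp_p$ is a diffeomorphism, so $r=d(p,\cdot)$ is smooth on $M$, the distance spheres $S(r)$ are smooth, and there is no cut locus to worry about. Fix an orthonormal basis $\{e_1,\dots,e_{n+1}\}$ of $T_p\mathbb{M}$ and set $\Phi(q)=\sin_{\delta}(r(q))\,u(q)\in T_p\mathbb{M}$, where $u(q)=\exp_p^{-1}(q)/r(q)$ is the unit radial direction; put $f_i=\langle\Phi,e_i\rangle$. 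Choosing $p\in C\Omega$ to be the center of mass of $M$ for the mass distribution $\frac{\sin_{\delta}r}{r}$ makes $\int_M\sin_{\delta}(r)\,u\,dV_M=0$, i.e. $\int_M f_i\,dV_M=0$ for every $i$, so each $f_i$ is admissible in the variational characterisation $\lambda_1(M)\int_M f_i^2\le\int_M\|\nabla^M f_i\|^2$. Summing over $i$ and using $\sum_i\langle u,e_i\rangle^2=1$ gives $\sum_i f_i^2=\sin_{\delta}^2 r$, hence
\[
\lambda_1(M)\int_M\sin_{\delta}^2 r\,dV_M\le\int_M\sum_i\|\nabla^M f_i\|^2\,dV_M .
\]

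The heart of the argument is the pointwise estimate of $\sum_i\|\nabla^M f_i\|^2=\sum_{\alpha}\|d\Phi(e_{\alpha})\|^2$, the squared Hilbert--Schmidt norm of $d\Phi$ restricted to $T_qM$, where $\{e_{\alpha}\}$ is an orthonormal basis of $T_qM$. I would decompose a unit vector $e\in T_qM$ as $e=a\,\partial_r+w$ with $w$ tangent to $S(r)$. Since $\sin_{\delta}'=\cos_{\delta}$ one has $d\Phi(\partial_r)=\cos_{\delta}(r)\,u$, and the radial image $u$ is orthogonal to the image of $T_qS(r)$. On the sphere directions $d\Phi|_{T_qS(r)}=\sin_{\delta}(r)\,\mathcal{J}_r^{-1}$, where $\mathcal{J}_r$ is the Jacobi-field map; the Rauch comparison theorem (the upper bound $K\le-\delta^2$ forcing $\|\mathcal{J}_r\bar w\|\ge\sin_{\delta}(r)\|\bar w\|$) gives $\|d\Phi(w)\|\le\|w\|$ for $w$ tangent to $S(r)$. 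Therefore $\|d\Phi(e)\|^2\le a^2\cos_{\delta}^2 r+\|w\|^2=1+\delta^2\sin_{\delta}^2(r)\,a^2$, using $\cos_{\delta}^2-1=\delta^2\sin_{\delta}^2$. Summing over $\{e_{\alpha}\}$, writing $\partial_r^{T}$ for the tangential part of $\partial_r$ and observing $\|\nabla^M\sin_{\delta}r\|^2=\cos_{\delta}^2(r)\|\partial_r^{T}\|^2\ge\delta^2\sin_{\delta}^2(r)\|\partial_r^{T}\|^2$, I obtain the clean bound $\sum_i\|\nabla^M f_i\|^2\le n+\|\nabla^M\sin_{\delta}r\|^2$. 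Integrating, the display becomes $\lambda_1(M)\int_M\sin_{\delta}^2 r\,dV_M\le n\,Vol(M)+\int_M\|\nabla^M\sin_{\delta}r\|^2$. This is exactly where the present case departs from Theorem \ref{thm1}: there $\cos_{\delta}^2\le1$ and the radial term is harmless, whereas here $\cos_{\delta}^2>1$ produces the extra gradient term.

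It remains to prove the comparison $\int_M\sin_{\delta}^2 r\,dV_M\ge\sin_{\delta}^2(R)\,Vol(S_{\delta}(R))$, for then dividing the previous inequality by this quantity and using $\lambda_1(S_{\delta}(R))=n/\sin_{\delta}^2(R)$ yields precisely the asserted bound. For this I would apply the divergence theorem to $Y=\sin_{\delta}^2(r)\,\partial_r$ on $\Omega$: since $\langle Y,\nu\rangle\le\sin_{\delta}^2 r$ on $M$ and the Laplacian comparison gives $\Delta_{\mathbb{M}}r\ge n\cos_{\delta}/\sin_{\delta}$, one gets $\int_M\sin_{\delta}^2 r\,dV_M\ge\int_{\Omega}\operatorname{div}Y\ge(n+2)\int_{\Omega}\sin_{\delta}\cos_{\delta}\,dV_{\mathbb{M}}$. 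The volume density comparison $\phi\ge\phi_{\delta}$ (again from $K\le-\delta^2$) transplants this to the model, $\int_{\Omega}\sin_{\delta}\cos_{\delta}\,dV_{\mathbb{M}}\ge\int_{\Omega_{\delta}}\sin_{\delta}\cos_{\delta}\,dV$. Finally, since $s\mapsto\sin_{\delta}(s)\cos_{\delta}(s)$ is nonnegative and increasing, a one-line rearrangement argument using $Vol(\Omega_{\delta})=Vol(B_{\delta}(R))$ gives $\int_{\Omega_{\delta}}\sin_{\delta}\cos_{\delta}\,dV\ge\int_{B_{\delta}(R)}\sin_{\delta}\cos_{\delta}\,dV$, and integrating in geodesic polar coordinates in $\mathbb{M}(-\delta^2)$ (density $\sin_{\delta}^n$) shows $(n+2)\int_{B_{\delta}(R)}\sin_{\delta}\cos_{\delta}\,dV=\sin_{\delta}^2(R)\,Vol(S_{\delta}(R))$. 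Chaining these inequalities proves the comparison.

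The main obstacle is the gradient estimate of the second paragraph: one must apply the Rauch comparison in the correct direction so that the tangential part of $d\Phi$ contracts while the radial part expands, and then recognise that the excess $\delta^2\sin_{\delta}^2(r)\|\partial_r^{T}\|^2$ is dominated by $\|\nabla^M\sin_{\delta}r\|^2$ — this is what forces the extra term into the statement. For the equality case I would trace equality back through each step: $\langle\partial_r,\nu\rangle\equiv1$ forces $\nu=\partial_r$, so $M$ is a geodesic sphere centred at $p$; equality in the Rauch and density comparisons forces the radial sectional curvatures, hence the metric on $\Omega$, to be those of constant curvature $-\delta^2$; and equality in the rearrangement forces $\Omega_{\delta}=B_{\delta}(R)$. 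Together these yield that $M$ is a geodesic sphere and $\Omega$ is isometric to $B_{\delta}(R)$, while conversely equality is immediate when $\Omega=B_{\delta}(R)$.
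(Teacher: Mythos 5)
Your argument is correct and reaches the paper's two key intermediate inequalities, namely $\lambda_1(M)\int_M\sin_{\delta}^2r\,dm\le n\,Vol(M)+\int_M\|\nabla^M\sin_{\delta}r\|^2\,dm$ and $\int_M\sin_{\delta}^2r\,dm\ge\sin_{\delta}^2(R)\,Vol(S_{\delta}(R))$, but it executes both steps differently from the paper. For the gradient estimate, the paper integrates by parts, decomposes $\Delta_M$ through the normal $\eta$ and then $\Delta$ through the radial direction, and invokes a Jacobi-field lemma (Lemma \ref{l2}) on the geodesic sphere $S_r$, finally discarding the nonnegative term $\sum_i f^2(\partial f_i/\partial\eta)^2$; you instead compute the Hilbert--Schmidt norm of $d\Phi$ restricted to $T_qM$ directly, splitting each tangent vector into radial and sphere-tangential parts. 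The Rauch input is identical, and your version is arguably cleaner: it makes transparent that the excess over $n$ is exactly $\delta^2\sin_{\delta}^2(r)\,\|\nabla^Mr\|^2\le\|\nabla^M\sin_{\delta}r\|^2$, which is precisely why the extra term appears only when $\cos_{\delta}>1$. For the integral comparison, the paper's Lemma \ref{lm1} restricts to the outermost radial graph $A\subset M$ and uses the area formula $dm=\sec\theta\,\phi\,du$ together with $\sec\theta\ge1$ and the Rauch/Gunther comparisons, whereas you apply the divergence theorem to $Y=\sin_{\delta}^2(r)\,\partial_r$ with the Laplacian comparison $\Delta r\ge n\cos_{\delta}/\sin_{\delta}$; both routes land on $(n+2)\int_{\Omega_{\delta}}\sin_{\delta}\cos_{\delta}\,dV$ and finish with the same rearrangement using $Vol(\Omega_{\delta})=Vol(B_{\delta}(R))$ and the monotonicity of $\sin_{\delta}\cos_{\delta}$. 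Your divergence-theorem route is more standard and avoids the slightly delicate bookkeeping with the set $A$ and multiple intersections of radial geodesics with $M$ (one should just note that $Y$ vanishes at $p$ so the singularity of $\partial_r$ there is harmless), while the paper's route has the advantage of localizing the equality analysis on $M$ itself via $\sec\theta(q)=1$. Your equality discussion ($\nu=\partial_r$ forcing $M$ to be a sphere, rigidity in Rauch/Gunther forcing constant curvature on $\Omega$, rigidity in the rearrangement forcing $\Omega_{\delta}=B_{\delta}(R)$) matches the paper's in substance.
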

In the case of rank-1 symmetric space of noncompact type, we have 
\begin{theorem}\label{thm3}
Let $(\overline{M}, ds^2)$ be a non compact rank-1 symmetric space with $\text{dim}\overline{M} = kn $ where $k
= \text{dim}_{\mathbb{R}}\mathbb{K}; \, \mathbb{K} = \mathbb{R}, \mathbb{C}, \mathbb{H}$ or $\mathbb{C}a$.
Let $M$ be a closed hypersurface in $\overline{M}$ which encloses the bounded region $\Omega$. Then for $k=1$ we have
\vspace{-.3cm}
\begin{eqnarray*}
\frac{\lambda_1(M)}{\lambda_1(S(R))} & \leq &  \frac{Vol(M)}{Vol(S(R))} + \frac{1}{(n-1)Vol(S(R))}\int_M \parallel \nabla^M\sinh\,r \parallel^2
\end{eqnarray*}
and for $k>1$ we have
\begin{eqnarray*}
\lambda_1(M) & \leq & \lambda_1(S(R))\left(\frac{Vol(M)}{Vol(S(R))}\right) + \frac{k-1}{\cosh^2R}\left(\frac{Vol(M)}{Vol(S(R))}\right) \\
& & + \ \frac{1}{\sinh^2R\,Vol(S(R))}\!\int_M\! \parallel \!\nabla^M\sinh\,r \!\parallel^2
\end{eqnarray*}
where $R>0$ is such that $Vol(\Omega) = Vol(B(R))$; here $B(R)$ and $S(R)$ are the geodesic ball and geodesic sphere respectively of radius $R$.
Further, the equality holds in above two inequalities if and only if $M$ is a geodesic sphere of radius $R$.  
\end{theorem}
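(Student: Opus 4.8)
The plan is to estimate the Rayleigh quotient of $M$ from above by means of mean-zero test functions built from the ambient radial geometry, and then to compare the resulting integrals over $M$ with the corresponding model quantities by a symmetrization argument. Throughout I use that $\overline{M}$ is a Hadamard manifold, so that the geodesic polar coordinates $(r,u)$ based at the center of mass $p$ are global and $r$ is smooth away from $p$.

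First, using the center of mass theorem with the weight $G(t)=\frac{\sinh t}{t}$ applied to $M$, fix $p\in C\Omega$ so that $\int_M\frac{\sinh r}{r}\,x_\alpha\,dV=0$ for each normal coordinate $x_\alpha$ at $p$. Set $f_\alpha=\sinh r\cdot\Theta_\alpha$, $\alpha=1,\dots,kn$, where $\Theta_\alpha(u)$ are the components of $u\in U_p\overline{M}\cong S^{kn-1}$; equivalently $f_\alpha=\frac{\sinh r}{r}x_\alpha$, which are smooth on $\overline{M}$. By the choice of $p$ each $f_\alpha$ has zero mean on $M$, so the variational characterization of $\lambda_1(M)$ applies to each $f_\alpha$; summing over $\alpha$ gives
\[
\lambda_1(M)\int_M\Big(\textstyle\sum_\alpha f_\alpha^2\Big)\,dV\le\int_M\textstyle\sum_\alpha\|\nabla^M f_\alpha\|^2\,dV .
\]

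The next step is to evaluate both integrands pointwise. From $\sum_\alpha\Theta_\alpha^2=1$ one gets $\sum_\alpha f_\alpha^2=\sinh^2 r$. Splitting $\nabla f_\alpha=\cosh r\,\Theta_\alpha\,\partial_r+\nabla^{S(r)}f_\alpha$ gives $\sum_\alpha(\partial_r f_\alpha)^2=\cosh^2 r$, and, writing $\nu=\langle\nu,\partial_r\rangle\partial_r+W$ for the unit normal of $M$, the identity $\sum_\alpha\Theta_\alpha\nabla^{S(r)}f_\alpha=0$ makes the radial–spherical cross terms of $\sum_\alpha\langle\nabla f_\alpha,\nu\rangle^2$ vanish. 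The key input is that the $f_\alpha$ restrict to first eigenfunctions of each geodesic sphere $S(r)$: exploiting the Hopf-type submersion structure of $S(r)$ (horizontal directions scaled by $\sinh^2 r$ and the $k-1$ fibre directions by $\sinh^2 r\cosh^2 r$, matching the density $\sinh^{kn-1}r\cosh^{k-1}r$), a direct computation gives $\sum_\alpha\|\nabla^{S(r)}f_\alpha\|^2=\lambda_1(S(r))\sinh^2 r$. Using $\|\nabla^M\sinh r\|^2=\cosh^2 r\,(1-\langle\nu,\partial_r\rangle^2)$ and discarding the nonnegative remainder $\sum_\alpha\langle\nabla^{S(r)}f_\alpha,W\rangle^2$, together with $\lambda_1(S(r))\sinh^2 r\le kn-1$, I obtain the pointwise bound $\sum_\alpha\|\nabla^M f_\alpha\|^2\le\|\nabla^M\sinh r\|^2+(kn-1)$, whence
\[
\lambda_1(M)\int_M\sinh^2 r\,dV\le (kn-1)\,\mathrm{Vol}(M)+\int_M\|\nabla^M\sinh r\|^2\,dV .
\]

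The main obstacle, and the heart of the argument, is the comparison $\int_M\sinh^2 r\,dV\ge\sinh^2 R\,\mathrm{Vol}(S(R))$. I would prove it by applying the divergence theorem to the radial field $Y=\sinh^2 r\,\partial_r$: since $\langle Y,\nu\rangle=\sinh^2 r\,\langle\partial_r,\nu\rangle\le\sinh^2 r$, we get $\int_M\sinh^2 r\,dV\ge\int_\Omega\mathrm{div}\,Y\,dV$, and $\mathrm{div}\,Y$ is a radial function increasing in $r$ (this uses the explicit density). A bathtub/rearrangement argument, invoking only $\mathrm{Vol}(\Omega)=\mathrm{Vol}(B(R))$ and the fact that an increasing radial weight has, at fixed volume, least integral over the ball $B(R)$ centered at $p$, gives $\int_\Omega\mathrm{div}\,Y\,dV\ge\int_{B(R)}\mathrm{div}\,Y\,dV=\sinh^2 R\,\mathrm{Vol}(S(R))$, the last equality again by the divergence theorem on the model ball. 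Substituting this into the displayed inequality and dividing by $\sinh^2 R\,\mathrm{Vol}(S(R))$ yields the case $k=1$ at once; for $k>1$ one rewrites $kn-1=\lambda_1(S(R))\sinh^2 R+(k-1)\tanh^2 R$ and uses $\frac{\tanh^2 R}{\sinh^2 R}=\frac{1}{\cosh^2 R}$ to obtain the stated three-term estimate. Finally, equality throughout forces $\langle\partial_r,\nu\rangle=1$, i.e.\ $\nu=\partial_r$, so $M$ is a geodesic sphere centered at $p$, and the rearrangement step then forces its radius to equal $R$; conversely $S(R)$ realizes equality, which settles the equality case.
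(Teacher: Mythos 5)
Your argument is correct and reaches the same intermediate inequality
$\lambda_1(M)\int_M\sinh^2 r\,dm \leq (kn-1)\,Vol(M)+\int_M\parallel\nabla^M\sinh r\parallel^2 dm$
as the paper, but it gets there and past there by somewhat different means, and the differences are worth recording. First, you compute $\sum_\alpha\parallel\nabla^M f_\alpha\parallel^2$ pointwise by splitting $\nabla f_\alpha$ into radial and spherical parts and subtracting the normal component, whereas the paper integrates by parts and decomposes $\Delta_M$ through $\Delta$ and $\Delta_{S(r)}$; these are the same computation in different clothing, and your identity $\sum_\alpha\parallel\nabla^{S(r)}f_\alpha\parallel^2=\lambda_1(S(r))\sinh^2 r$ (which I checked against the Berger-metric description of $S(r)$) is exactly the paper's use of $\Delta_{S(r)}f_i=\lambda_1(S(r))f_i$ from \cite{gs}. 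Second, you discard the term $(k-1)\tanh^2 r$ pointwise via $\lambda_1(S(r))\sinh^2 r\leq kn-1$, while the paper keeps it, bounds $\int_M\tanh^2 r\,dm$ from below by Lemma \ref{lm2}, and then drops the resulting constant $-\tfrac{k-1}{\cosh^2R}$ at the end; both routes land on the identical stated bound. The genuinely different step is your proof of the key comparison $\int_M\sinh^2 r\,dm\geq\sinh^2R\,Vol(S(R))$ (Lemma \ref{lm1}(2) in the paper): you apply the divergence theorem to $Y=\sinh^2 r\,\partial r$, use $\innprod{Y}{\eta}\leq\sinh^2 r$, and then run the bathtub rearrangement on the increasing radial function $\mathrm{div}\,Y=(kn+1)\sinh r\cosh r+(k-1)\sinh^2 r\tanh r$. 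The paper instead parametrizes the outermost sheet $A\subset M$ over $U_p\overline{\mathbb{M}}$, uses $dm=\sec\theta\,\phi\,du$ with $\sec\theta\geq1$, and converts to the same volume integral — your $\mathrm{div}\,Y$ is literally the paper's integrand $f(r)$ for $\mathbb{C}\mathbb{H}^n$. Your route is cleaner: it avoids the measurability and covering issues of the set $A$ and handles the angle factor with the correct sign automatically; what it costs is nothing here, since $M=\partial\Omega$ is assumed. One shared caveat: for $k>1$ the chain of inequalities (yours via $\lambda_1(S(r))\sinh^2 r\leq kn-1$, the paper's via dropping $-\tfrac{k-1}{\cosh^2R}$) is strict on a geodesic sphere, since $\lambda_1(S(R))=\tfrac{kn-1}{\sinh^2R}-\tfrac{k-1}{\cosh^2R}<\tfrac{kn-1}{\sinh^2R}$; so your closing claim that $S(R)$ realizes equality in the $k>1$ inequality as stated is not accurate — but this defect is inherited from the theorem and the paper's own proof, not introduced by you, and the $k=1$ equality case is fine.
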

\section{Preliminaries}\label{prelm}
Let $M$ be a closed hypersurface in $\mathbb{M} $ or in $ \overline{\mathbb{M}}$ and let $\Omega$ be the bounded domain whose boundary is $M$.
Fix a point $p \in \Omega$. Then for every point $q \in M,$ there exist unique geodesic segment $\gamma_q$ such that 
$\gamma_q(0) = p, \gamma_q^{\prime}(0) = u$ and $\gamma_q(t_q(u)) = q$. We observe that this geodesic segment may intersect $M$ at points other than $q$. 
For $u \in U_p\mathbb{M} \, (\text{or} \,U_p\overline{\mathbb{M}})$, let 
\begin{equation*}
r(u) = max\{r>0 \, | \, exp_p(ru) \in M\}
\end{equation*} 
and define 
$$
A = \{exp_p(r(u)u) \, | \, u \in U_p\mathbb{M} \, (\text{or} \,U_p\overline{\mathbb{M}})\}.
$$
Then $A \subset M$ and hence for any non-negative measurable function $f$ on $M$, we have $\int_Mf \geq \int_A f$. 

We now prove the following lemma which is crucial in the proofs of main results. 
\begin{lemma}\label{lm1} 
Let $M$ be a closed hypersurface in $\mathbb{M}$ or in $\overline{\mathbb{M}}$ and $\Omega$ be a bounded domain with boundary $ \partial \Omega = M.$
Fix a point $p \in \Omega$. Then the following holds:
\begin{enumerate}
 \item If $M \subseteq \mathbb{M},$ then
\begin{eqnarray*}
\int_M \sin_{\delta}^2\,d(p,q)dm \geq Vol(S_{\delta}(p_0, R))\sin_{\delta}^2\,R
\end{eqnarray*}
where $dm$ is the measure on $M, \, S_{\delta}(p_0, R)$ is the geodesic sphere and $ B_{\delta}(p_0, R)$ is the
geodesic ball of radius $R$, centered at $p_o$ in the space form 
$\mathbb{M}(k)$ and $R>0$ is such that $Vol(\Omega_{\delta}) = Vol(B_{\delta}(p_0, R))$.

Further, the equality holds if and only if $M$ is a geodesic sphere in $\mathbb{M}$ and $\Omega $ is isometric to $ B_{\delta}(p_0, R)$.
\item If $M \subseteq \overline{\mathbb{M}},$ then
\begin{eqnarray*}
\int_M \sinh^2d(p,q)dm \geq Vol(S(p, R))\sinh^2R
\end{eqnarray*}
where $dm$ is the measure on $M, \, S(p, R)$ is the geodesic sphere and $ B(p, R)$ is the geodesic ball of radius $R$ centered at $p$ in $\overline{\mathbb{M}}$ 
and $R>0$ is such that $Vol(\Omega) = Vol(B(p, R))$.

The equality holds if and only if $M$ is a geodesic sphere centered at $p$ of radius $R$.  
\end{enumerate}
\end{lemma}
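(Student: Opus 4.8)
The plan is to compare the integral over $M$ with an integral over the radial "outer boundary" set $A$ defined just before the lemma, and then transfer that comparison to the model space form via the exponential map and a volume-density estimate. I will treat case (1) in detail; case (2) is entirely analogous with $\sin_\delta$ replaced by $\sinh$ and $\mathbb{M}(k)$ replaced by $\overline{\mathbb{M}}$.

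First I would reduce the surface integral to a solid-angle integral. Since $A \subseteq M$ and $\sin_\delta^2 d(p,q) \ge 0$, we have
$$
\int_M \sin_\delta^2 d(p,q)\,dm \ \ge\ \int_A \sin_\delta^2 d(p,q)\,dm.
$$
Parametrizing $A$ by $u \in U_p\mathbb{M}$ through $q = \exp_p(r(u)u)$, the surface measure $dm$ on $A$ dominates the projected measure $\phi(r(u),u)\,dr\,du$ coming from the radial foliation; more precisely, writing the hypersurface area element in polar coordinates, the component of $dm$ transverse to the radial direction gives at least $\phi(r(u),u)\,du$ after integrating the Jacobian in $r$, so that
$$
\int_A \sin_\delta^2 d(p,q)\,dm \ \ge\ \int_{U_p\mathbb{M}} \!\int_0^{r(u)} \sin_\delta^2(t)\,\phi(t,u)\,dt\,du.
$$
The key analytic input here is the curvature comparison: under $0 \le K_{\mathbb{M}} \le \delta^2$ (resp.\ $K_{\mathbb{M}} \le 0$), the Rauch/Bishop comparison theorem controls $\phi(t,u)$ against the model density $\phi_\delta(t)$, and by construction $\Omega_\delta = \exp_{p_0}(i(W))$ has the same solid-angle data $r(u)$ transported by the isometry $i$. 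This lets me rewrite the right-hand side as the integral of $\sin_\delta^2$ over the solid region $\Omega_\delta$ in the model space $\mathbb{M}(k)$, i.e.\ as $\int_{\Omega_\delta} \sin_\delta^2 d(p_0,\cdot)\,dV$ up to the divergence-type identity relating the boundary integral over $M_\delta$ to a volume integral over $\Omega_\delta$.

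Next I would apply a rearrangement/symmetrization step in the model space. Among all domains in $\mathbb{M}(k)$ of fixed volume $Vol(\Omega_\delta)=Vol(B_\delta(R))$, the function $\sin_\delta^2 d(p_0,\cdot)$ is radially increasing, so its integral over $\Omega_\delta$ is minimized when $\Omega_\delta$ is the centered geodesic ball $B_\delta(R)$ — this is the standard bathtub/co-area inequality for a monotone radial weight. Evaluating the resulting boundary integral on the sphere $S_\delta(R)$, where $d(p_0,\cdot)\equiv R$ is constant, yields exactly $Vol(S_\delta(R))\sin_\delta^2 R$, giving the claimed bound. The hard part will be making the passage from $dm$ on $A$ to the radial density $\phi\,du$ rigorous: one must justify that the Jacobian factor relating the hypersurface element to $du$ is bounded below by the radial volume density, which is where the choice of $A$ as the \emph{outermost} radial intersection and the sign of the support function enter. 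Equality throughout forces $\phi \equiv \phi_\delta$ (so $K_{\mathbb{M}}$ is constant and equal to the model value along every radial geodesic), forces $A = M$ with the radial geodesics meeting $M$ orthogonally, and forces $\Omega_\delta = B_\delta(R)$; unwinding these conditions shows $M$ is a geodesic sphere and $\Omega$ is isometric to $B_\delta(p_0,R)$, as asserted.
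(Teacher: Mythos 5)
Your overall skeleton (restrict to the outermost radial set $A$, compare the induced surface measure with the radial volume density, transfer to the model space via Gunther/Rauch, and finish with a bathtub rearrangement for an increasing radial weight) is indeed the paper's strategy, but the central computational step is wrong as written, and the error propagates to the final constant. You claim
$$
\int_A \sin_\delta^2 d(p,q)\,dm \ \ge\ \int_{U_p\mathbb{M}}\int_0^{r(u)}\sin_\delta^2(t)\,\phi(t,u)\,dt\,du,
$$
i.e.\ that an $n$-dimensional surface integral dominates the $(n+1)$-dimensional volume integral of the \emph{same} weight. This is false in general: already in $\mathbb{R}^{n+1}$ the two sides scale as $R^{n+2}$ and $R^{n+3}/(n+3)$, so the inequality reverses for large domains. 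It also leads to the wrong endpoint: after your rearrangement you are left with $\int_{B_\delta(R)}\sin_\delta^2 r\,dV$, which is \emph{not} $Vol(S_\delta(R))\sin_\delta^2 R$ (in the flat case, $\omega_n R^{n+3}/(n+3)$ versus $\omega_n R^{n+2}$), and no ``divergence-type identity'' converts one into the other without changing the weight. The correct passage is: on $A$ one has the exact formula $dm=\sec\theta\,\phi(t_q(u))\,du\ \ge\ \phi(t_q(u))\,du$ (there is no integration in $r$ at this stage); Gunther's inequality $\phi\ge\phi_\delta=\sin_\delta^n$ and Rauch then reduce everything to the boundary-type integral $\int_{U_{p_0}\mathbb{M}(k)}\sin_\delta^{n+2}t_{\bar q}(\bar u)\,d\bar u$ in the model space; and only then does one convert to a volume integral via the identity $\sin_\delta^{n+2}t=(n+2)\int_0^t\sin_\delta^{n+1}r\,\cos_\delta r\,dr$, whose integrand is $(n+2)f(r)\phi_\delta(r)$ with $f(r)=\sin_\delta r\,\cos_\delta r$ --- not $\sin_\delta^2(r)\,\phi_\delta(r)$.

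The distinction is not cosmetic. The bathtub step must be applied to $f(r)=\sin_\delta r\cos_\delta r=\frac{1}{2\delta}\sin 2\delta r$, which is increasing only on $[0,\frac{\pi}{4\delta})$ when $0\le K\le\delta^2$; this is exactly where the standing hypothesis that $M$ lies in a ball of radius less than $\frac{\pi}{4\delta}$ is consumed. Your weight $\sin_\delta^2 r$ is increasing on the larger interval $[0,\frac{\pi}{2\delta})$, so your argument never sees that restriction --- a sign the integrand is the wrong one. With the correct integrand the rearrangement gives $\int_{\Omega_\delta}f\,dV\ge\int_{B_\delta(R)}f\,dV=\frac{1}{n+2}Vol(S_\delta(R))\sin_\delta^2R$, and the factor $n+2$ cancels to yield precisely the claimed bound. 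Your equality discussion is right in outline ($\sec\theta\equiv 1$, $\phi\equiv\phi_\delta$, vanishing symmetric difference), but it can only be extracted once the chain of inequalities is the correct one.
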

\begin{proof}
We begin by considering $M \subset \mathbb{M}$. Let $q \in M$ and $\phi (t_q(u))$ be the volume density of the geodesic sphere $S(p, t_q(u))$ at the point $q$.
Let $\theta (q)$ be the angle between the 
unit outward normal $\eta (q)$ to $M$ and the radial vector $\partial r(q)$. Let $du$ be the spherical volume density of the unit sphere 
$U_p\mathbb{M}$. Then we know that (\cite{mtw}, p.385, or \cite{tf}, p.1097) $dm(q) = \sec\theta (q)\phi(t_q(u))du$. Hence, 
\begin{eqnarray*}
\int_M \sin_{\delta}^2\,d(p,q)dm(q) & \geq & \int_A \sin_{\delta}^2\,d(p,q)dm(q) \\
& = & \int_{U_p\mathbb{M}}\sin_{\delta}^2\,t_q(u) \, \sec\theta (q)\,\phi(t_q(u))du\\
 & \geq & \int_{U_p\mathbb{M}} \sin_{\delta}^2\,t_q(u) \,\phi(t_q(u))du.
\end{eqnarray*}
For $q \in M,$ consider the unit speed geodesic segment $\gamma_q$ in $\mathbb{M}$ joining $p$ and $q$ and the corresponding geodesic segment
$\gamma_{\bar{q}}$ joining $p_0 $ and $\bar{q} \in M_{\delta}$ in the space form $\mathbb{M}(k)$.
Then by Rauch comparison theorem \cite{dc} it follows that $l(\gamma_q) \geq l(\gamma_{\bar{q}})$ and 
hence $t_q(u) \geq t_{\bar{q}}(\bar{u})$. By Gunther's volume comparison theorem \cite{ghl} 
we also have $\phi (t_q(u)) \geq \phi_{\delta}(t_q(u)) = \sin_{\delta}^nt_q(u)$ along the geodesics $\gamma_p $ and $\gamma_{\bar{q}}$ respectively.
Hence, 
\begin{eqnarray}\nonumber
\int_M \sin_{\delta}^2\,d(p,q)dm(q) & \geq & \int_{U_p\mathbb{M}}\sin_{\delta}^2t_q(u) \,\phi(t_q(u))du \nonumber \\
& \geq & \int_{U_p\mathbb{M}}\sin_{\delta}^2t_q(u) \,\phi_{\delta}(t_q(u))du \nonumber \\
& \geq & \int_{U_p\mathbb{M}(k)} \sin_{\delta}^2t_{\bar{q}}(\bar{u}) \,\phi_{\delta}(t_{\bar{q}}(\bar{u}))d\bar{u} \nonumber\\
& = & \int_{U_p\mathbb{M}(k)}\sin_{\delta}^{n+2}\,t_{\bar{q}}(\bar{u})d\bar{u} \nonumber \\
& = & (n+2)\int_{U_p\mathbb{M}(k)} \int_0^{t_{\bar{q}}(\bar{u})} \sin_{\delta}^{n+1}\,r\,\cos_{\delta} \,r \, dr\,d\bar{u} \nonumber \\
& \geq & (n+2)\int_{\Omega_{\delta}} \sin_{\delta} \,r\,\cos_{\delta} \,r\,dV.\nonumber 
\end{eqnarray}
Let $R>0$ be such that $Vol(\Omega_{\delta}) = Vol(B_{\delta}(p_0,R))$ and $f(r) = \sin_{\delta}r \, \cos_{\delta}r$. 
We observe that 
\begin{equation*}
   f  \ \ \text{is increasing on} \ \   
\begin{cases}
 [0, \frac{\pi}{4\delta}) & if \, \, 0 \leq K \leq \delta^2 \\
 [0, \infty) & if \, K \leq 0
 \end{cases}  
\end{equation*}
and $ Vol(\Omega_{\delta}\backslash(\Omega_{\delta} \cap B_{\delta}(p_0,R)))  =  Vol(B_{\delta}(p_0,R)\backslash (\Omega_{\delta}\cap B_{\delta}(p_0,R))).$ 
Using these facts we get 
\begin{eqnarray*}
\int_{\Omega_{\delta}} f(r)\,dV & = & \int_{\Omega_{\delta}\cap B_{\delta}(p_0,R)} f(r)\,dV + 
\int_{\Omega_{\delta}\backslash(\Omega_{\delta} \cap B_{\delta}(p_0,R))} f(r)\,dV \\
& = & \int_{B_{\delta}(p_0,R)}f(r)\,dV - \int_{B_{\delta}(p_0,R)\backslash \Omega_{\delta}\cap B_{\delta}(p_0,R)} f(r)\,dV \\  
& & + \int_{\Omega_{\delta}\backslash(\Omega_{\delta} \cap B_{\delta}(p_0,R))} f(r)\,dV
\end{eqnarray*}
\begin{eqnarray*} 
& \geq & \int_{B_{\delta}(p_0,R)}f(r)\,dV - \int_{B_{\delta}(p_0,R)\backslash \Omega_{\delta}\cap B_{\delta}(p_0,R)} f(r)\,dV\\
& & +\int_{\Omega_{\delta}\backslash(\Omega_{\delta} \cap B_{\delta}(p_0,R))} f(R)\,dV \\
& = & \int_{B_{\delta}(p_0,R)}f(r)\,dV + 
\int_{B_{\delta}(p_0,R)\backslash \Omega_{\delta}\cap B_{\delta}(p_0,R)} (f(R)- f(r))\,dV \\
& \geq & \int_{B_{\delta}(p_0,R)}f(r)\,dV \\
& = & \int_{U_p\mathbb{M}(k)}\int_0^R f(r)\sin_{\delta}^n \,r\,dr\,d\bar{u} \\
& = & \int_{U_p\mathbb{M}(k)} \int_0^R \sin_{\delta}^{n+1} \,r\,\cos_{\delta}\,r\,dr\,d\bar{u} \\
& = & \frac{1}{n+2}\int_{U_p\mathbb{M}(k)} \sin_{\delta}^{n+2}\,R \, d\bar{u}\\ 
& = & Vol(S_{\delta}(R))\frac{\sin_{\delta}^2R}{n+2}. 
\end{eqnarray*}
Thus we get 
$$
\int_M \sin_{\delta}^2 \,d(p,q)dm(q) \geq Vol(S_{\delta}(R))\sin_{\delta}^2 \,R.
$$

Further the equality holds in the above inequality if and only if the following conditions hold:
\begin{enumerate}
\item 
$\sec \, \theta(q) = 1$ and $l(\gamma_q) = l(\gamma_{\bar{q}})$  for all points $q \in M.$
\item
$\phi (r)  = \phi_{\delta}(r)$ for $r \,  \leq \text{diam}(\mathbb{M})$ along the geodesics $\gamma_p $ and $\gamma_{\bar{q}}$ respectively. 
\item 
$Vol(B_{\delta}(p_0,R)\backslash (\Omega_{\delta}\cap B_{\delta}(p_0,R))) = 0$.
\end{enumerate}
Now $\sec \, \theta(q) = 1$ implies that the outward normal $\eta(q) = \partial r(q).$ Thus the first condition implies that $\eta(q) = \partial r(q)$ 
for all points in $q \in M$. This shows that $M$ is a 
geodesic sphere centered at $p$. The equality criteria in Gunther's volume comparison theorem (\cite{jhe}, \cite{ghl}) says that if 
$\phi (r)  = \phi_{\delta}(r)$ for $ r \leq R \leq \text{diam}(\mathbb{M})$ then the geodesic balls $B(p,R)$ and $B_{\delta}(p_0, R)$ are isometric. 
Hence we see that $\Omega $ is isometric to $ B_{\delta}(p_0, R)$.

Now suppose $M \subset \overline{\mathbb{M}}$. 
For the noncompact rank-1 symmetric spaces $(\overline{\mathbb{M}}, ds^2)$, the density function $\phi$ along 
the geodesics starting at the point $p$ is given by $\phi(r) = sinh^{kn-1}r\,cosh^{k-1}r.$ The computation is slightly different in this case.
As an illustration we give below an outline of the proof for $\mathbb{C}\mathbb{H}^n$.
For other noncompact rank-1 symmetric spaces the proof follows similarly.  

We proceed as in the earlier computation to get,
\begin{eqnarray*}
\int_M \sinh^2\,d(p,q)dm(q) & \geq & \int_A \sinh^2\,d(p,q)dm(q) \\
& = & \int_{U_p\mathbb{M}}\sinh^2\,t_q(u) \, \sec\theta (q)\,\phi(t_q(u))du\\ 
& \geq & \int_{U_p\mathbb{M}} \sinh^2\,t_q(u) \,\phi(t_q(u))du\\
& = & \int_{U_p\mathbb{M}}\sinh^{2n+1}t_q(u)\,\cosh\,t_q(u) du \\
& = & \int_{U_p\mathbb{M}}\int_0^{t_q(u)}f(r)\sinh^{2n-1}r\,\cosh\,r\, dr\, du \\
& \geq & \int_{\Omega} f(r) \, dV
\end{eqnarray*}
where $ f(r) = (2n+1)\sinh\,r\,\cosh\,r + \sinh^2r\,\tanh\,r.$ Notice that $f(r)$ is increasing for $r\,\geq\,0.$
The rest of the proof follows the same way as in the earlier case.
\end{proof}
A computation similar to the above lemma proves the following lemma.
\begin{lemma}\label{lm2}
Let $(\overline{\mathbb{M}}, ds^2)$ be a noncompact rank-1 symmetric space. Let $M$ be a closed hypersurface 
in $\overline{\mathbb{M}}$ and $\Omega$ be the bounded domain with boundary $\partial \Omega = \mathbb{M}$. Fix a point $p \in \Omega$. Then
$$
\int_M \tanh^2d(p,q)dm \geq Vol(S(p, R))\tanh^2R
$$
where $ S(p, R)$ is the geodesic sphere and $ B(p, R)$ is the geodesic ball of radius $R$ centered at $p$ in $\overline{\mathbb{M}}$ 
and $R>0$ is such that $Vol(\Omega) = Vol(B(p, R))$.\\
The equality holds if and only if $M$ is a geodesic sphere centered at $p$ of radius $R$.  
\end{lemma}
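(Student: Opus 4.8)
The plan is to follow the template of Lemma~\ref{lm1} for the case $M\subset\overline{\mathbb{M}}$, simply replacing the radial weight $\sinh^2 d(p,q)$ by $\tanh^2 d(p,q)$ and carrying out the same three-step reduction. First I would restrict the integral from $M$ to the subset $A$, on which each radial direction $u\in U_p\overline{\mathbb{M}}$ is met exactly once at the outermost radius $r(u)=t_q(u)$, then use the area formula $dm(q)=\sec\theta(q)\,\phi(t_q(u))\,du$ together with $\sec\theta(q)\geq 1$ to obtain
$$
\int_M\tanh^2 d(p,q)\,dm(q)\ \geq\ \int_{U_p\overline{\mathbb{M}}}\tanh^2 t_q(u)\,\phi(t_q(u))\,du,
$$
where $\phi(r)=\sinh^{kn-1}r\,\cosh^{k-1}r$ is the volume density.

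The key computation is to rewrite the radial integrand as an exact derivative, so that it can be compared with an integral over $\Omega$. Setting $g(r)=\tanh^2 r\,\phi(r)=\sinh^{kn+1}r\,\cosh^{k-3}r$, one checks that $g'(r)=f(r)\,\phi(r)$ with
$$
f(r)=(kn+1)\tanh r+(k-3)\tanh^3 r,
$$
which for $\mathbb{C}\mathbb{H}^n$ $(k=2)$ reads $f(r)=(2n+1)\tanh r-\tanh^3 r$. Since $g(0)=0$, the fundamental theorem of calculus gives $\tanh^2 t_q(u)\,\phi(t_q(u))=\int_0^{t_q(u)}f(r)\,\phi(r)\,dr$; because $f\geq 0$ and $\Omega\subset\{(r,u):0\leq r\leq r(u)\}$ in polar coordinates, integrating over $U_p\overline{\mathbb{M}}$ yields $\int_{U_p\overline{\mathbb{M}}}\tanh^2 t_q(u)\,\phi(t_q(u))\,du\geq\int_\Omega f(r)\,dV$.

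It then remains to run the volume-rearrangement argument exactly as in Lemma~\ref{lm1}: using that $f$ is increasing and that $Vol(\Omega)=Vol(B(p,R))$ forces $Vol(\Omega\setminus B(p,R))=Vol(B(p,R)\setminus\Omega)$, one gets $\int_\Omega f\,dV\geq\int_{B(p,R)}f\,dV$, and the last integral evaluates, via $g(R)=\tanh^2 R\,\phi(R)$ and $Vol(S(p,R))=\phi(R)\,Vol(U_p\overline{\mathbb{M}})$, to $Vol(S(p,R))\tanh^2 R$. For the equality discussion, $\sec\theta(q)\equiv 1$ forces $\eta(q)=\partial r(q)$, so that $M$ is a geodesic sphere centered at $p$, while tightness of the rearrangement forces $\Omega=B(p,R)$, i.e. $M=S(p,R)$.

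The main obstacle is the monotonicity (and nonnegativity) of $f$, on which the rearrangement step rests. From
$$
f'(r)=\frac{1}{\cosh^2 r}\,\bigl[(kn+1)+3(k-3)\tanh^2 r\bigr]
$$
one sees that $f$ is increasing, and $f(r)=\tanh r\,[(kn+1)+(k-3)\tanh^2 r]\geq 0$, for every $k\geq 2$; this is precisely the range in which the lemma is applied, since the $\tfrac{k-1}{\cosh^2 R}$ term of Theorem~\ref{thm3} only appears for $k>1$. The sign of the $\tanh^3$ coefficient is the delicate point, so I would verify that the bracket stays positive as $\tanh^2 r\to 1$ case by case $(k=2,4,8)$ before asserting monotonicity and concluding as in the previous lemma.
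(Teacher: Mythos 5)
Your proposal is correct and is exactly the argument the paper intends: the paper gives no explicit proof of Lemma \ref{lm2}, stating only that ``a computation similar to the above lemma'' (Lemma \ref{lm1}, case $M\subset\overline{\mathbb{M}}$) works, and your identification of $g(r)=\sinh^{kn+1}r\,\cosh^{k-3}r$ with $f(r)=(kn+1)\tanh r+(k-3)\tanh^3 r$, together with the verification that $f$ is nonnegative and increasing for $k=2,4,8$ (the only cases where the lemma is used in Theorem \ref{thm3}), supplies precisely the missing computation. The rearrangement step and the equality discussion also match the template of Lemma \ref{lm1}.
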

\begin{remark}
Lemma \ref{lm1} and lemma \ref{lm2} are also valid for hypersrufaces in compact rank-1 symmetric spaces.
\end{remark}
\section{Proof of Results}\label{proofs} 

Let $M$ be a closed hypersurface of $\mathbb{M}$  and 
$p \in \mathbb{M}$ be a center of mass corresponding to the mass distribution function $\frac{\sin_{\delta}r}{r}$. 
Let $X =(x_1, x_2, ..., x_{n+1})$ be the geodesic normal coordinate system centered at $p$. 
Consider the functions $g_i = f.\frac{x_i}{r}$ where $f = \sin_{\delta}r$. Then $\int_Mg_i = 0$ for $1 \leq i \leq n+1.$
Using $g_i$'s as test functions in the Rayleigh quotient, we have
\begin{eqnarray}
\lambda_1(M)\int_M\sum_{i=1}^{n+1}g_i^2 dm & \leq & \int_M\sum_{i=1}^{n+1}\parallel \nabla^M g_i \parallel^2dm \label{4.1}\\
& = & \int_M \sum_{i=1}^{n+1}g_i\Delta_Mg_idm \nonumber\\
& = & \int_Mf\Delta_Mfdm + \int_M f^2\sum_{i=1}^{n+1}f_i\Delta_Mf_idm \nonumber
\end{eqnarray}
where $f_i = \frac{x_i}{r}$. But $\sum_{i=1}^{n+1}g_i^2 = f^2$. Thus we get
\begin{equation}\label{4.2}
\lambda_1(M)\int_Mf^2 dm \leq  \int_Mf\Delta_Mfdm + \int_M f^2\sum_{i=1}^{n+1}f_i\Delta_Mf_idm.
\end{equation}
We now decompose the laplacian on $M$ as 
$$
\Delta_M =  \frac{\partial^2}{\partial \eta^2} + Tr(A)\frac{\partial}{\partial \eta} + \Delta
$$ 
where $\eta$ is the unit outward normal to $M$, $A$ is the Weingarten map of $M$ and $\Delta$ is the laplacian on $\mathbb{M}$. We do another 
decomposition of $\Delta$ along the radial geodesic starting from $p$ as 
$$ 
\Delta = - \frac{\partial^2}{\partial r^2} - Tr(A)\frac{\partial}{\partial r} + \Delta_{S(r)}
$$ 
where $\Delta_{S(r)}$ is the laplacian of $S(r)$. We also notice that
$ \sum_{i=1}^{n+1}f_i\frac{\partial f_i}{\partial \eta} = \frac{1}{2}\sum_{i=1}^{n+1}\innprod{\nabla f_i^2}{\eta} = 0 $
and $\frac{\partial}{\partial r}(f_i) = 0$. Using these we have 
\begin{center}
 \begin{eqnarray}
\sum_{i=1}^{n+1}f_i\Delta_Mf_i & = & \sum_{i=1}^{n+1}f_i\Delta f_i  + \sum_{i=1}^{n+1}f_i\frac{\partial^2f_i}{\partial \eta^2}\nonumber \\
& = & \sum_{i=1}^{n+1}f_i\Delta_{S_r}f_i - \sum_{i=1}^{n+1}\left(\frac{\partial f_i}{\partial \eta}\right)^2 \nonumber \\
& = & \sum_{i=1}^{n+1}\parallel \nabla^{S_r}f_i \parallel^2 - \sum_{i=1}^{n+1}\left(\frac{\partial f_i}{\partial \eta}\right)^2\nonumber \\
& = & \frac{1}{r^2}\sum_{i=1}^{n+1}\parallel \nabla^{S_r}x_i \parallel^2 - \sum_{i=1}^{n+1}\left(\frac{\partial f_i}{\partial \eta}\right)^2 \label{4.3}.
\end{eqnarray}
\end{center}
Thus \eqref{4.2} becomes 
\begin{eqnarray}\nonumber
\lambda_1(M)\int_Mf^2 dm & \leq & \int_M \parallel \nabla^Mf \parallel^2dm + \sum_{i=1}^{n+1}\int_M \frac{f^2\parallel \nabla^{S_r}x_i \parallel^2}{r^2}dm \\ \label{4.4}
& & -\sum_{i=1}^{n+1}\int_M f^2\left(\frac{\partial f_i}{\partial \eta}\right)^2dm.
\end{eqnarray}
Now let $M$ be a closed hypersurface in the noncompact rank-1 symmetric space $\overline{\mathbb{M}}$. 
Let $p \in \overline{\mathbb{M}}$ be a center of mass corresponding to the mass distribution function $\frac{\sinh\,r}{r}$ and let $ f= sinh\,r$. 
The same calculation as above shows that \eqref{4.4} holds in this case also. 

We now estimate $\sum\parallel \nabla^{S_r}x_i \parallel^2$. 
\\
First, consider the case $M \subset\overline{\mathbb{M}}$. It is known \cite{gs} that $\Delta_{S(r)}f_i =  \lambda_1 (S(r))f_i$,
where $\lambda_1 (S(r)) $ is the first eigen value of the laplacian  $\Delta_{S(r)}$ of the geodesic sphere $S(r) \subset\overline{\mathbb{M}}$. Thus
\begin{equation*}
 \sum_{i=1}^{kn}\frac{\parallel \nabla^{S_r}x_i \parallel^2}{r^2} = \sum_{i=1}^{kn}f_i\Delta_{S_r}f_i = \lambda_1 (S(r)).
\end{equation*}
Hence \eqref{4.3} can be written as  
\begin{equation*}	
 \sum_{i=1}^{kn}f_i\Delta_Mf_i = \lambda_1(S(r)) - \sum_{i=1}^{kn}\left(\frac{\partial f_i}{\partial \eta}\right)^2.
\end{equation*}
Substituting this in \eqref{4.4} we get for a closed hypersurface $M$ in the noncompact rank-1 symmetric space $\overline{\mathbb{M}}$,  
\begin{equation}\label{4.5}
 \lambda_1(M)\int_Mf^2 dm \leq \int_M \parallel \nabla^Mf \parallel^2dm + \int_M f^2\left(\lambda_1(S(r))-
\sum_{i=1}^{kn}\left(\frac{\partial f_i}{\partial \eta}\right)^2\right)dm.
\end{equation}

The following lemma gives an estimate of $\sum\parallel \nabla^Mx_i \parallel^2$ in the general case.
\begin{lemma}\label{l2}
Let $\delta > 0$ be given, and $\mathbb{M}$ be a complete, simply connected Riemannian manifold of dimension $(n+1)$ such that the sectional 
curvature satisfies $K_{\mathbb{M}} \leq k$ where $ k =\pm \delta^2$ or $0$. Let $M$ be a closed hypersruface and $\Omega$ be a bounded domain such that 
$\partial \Omega = M$. Fix a point $p\in \Omega $ and let $X = (x_1, x_2, ..., x_{n+1})$ be the geodesic normal
coordinate system at $p$. For $K_{\mathbb{M}} \leq \delta^2,$ 
we assume that $\Omega$ is contained in a geodesic sphere of radius $ R < \frac{\pi}{\delta}$. Then for every point $q \in M$,
$$
\sum_{i=1}^{n+1}\parallel \nabla^Mx_i(q) \parallel^2  \leq \frac{n\,d(p, q)^2}{\sin_{\delta}^2d(p, q)}.
$$
\end{lemma}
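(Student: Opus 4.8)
The plan is to reduce the quantity to a statement about the differential of $\exp_p$ and then invoke Rauch comparison. Fix $q\in M$, write $r=d(p,q)$, let $u\in U_p\mathbb{M}$ be the initial direction of the minimal geodesic to $q$, and choose an orthonormal basis $\{E_1,\dots,E_n\}$ of $T_qM$. Since $\nabla^M x_i$ is the orthogonal projection of $\nabla x_i$ onto $T_qM$,
$$
\sum_{i=1}^{n+1}\|\nabla^M x_i(q)\|^2=\sum_{a=1}^{n}\sum_{i=1}^{n+1}\langle\nabla x_i,E_a\rangle^2=\sum_{a=1}^{n}\sum_{i=1}^{n+1}\bigl(dx_i(E_a)\bigr)^2 .
$$
Because the chart $X=(x_1,\dots,x_{n+1})$ is $\exp_p^{-1}$ expressed in an orthonormal frame of $T_p\mathbb{M}$, the inner sum is precisely $\|(d\exp_p)^{-1}(E_a)\|^2$ measured in $T_p\mathbb{M}$. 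Thus I have reduced the claim to the operator estimate $\sum_{a=1}^n\|(d\exp_p)^{-1}(E_a)\|^2\le \tfrac{n\,r^2}{\sin_\delta^2 r}$, i.e. to controlling how much $(d\exp_p)^{-1}$ can stretch the $n$ unit vectors tangent to $M$.

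Next I would compute the stretching of $(d\exp_p)_{ru}$ using the Gauss lemma and Jacobi fields. Decompose each $E_a=E_a^r\,\partial_r+E_a^\perp$ into its radial part and its part tangent to the geodesic sphere $S(r)$. By the Gauss lemma $(d\exp_p)^{-1}$ sends $\partial_r$ to the unit vector $u$, so it preserves radial length; on the spherical directions, a vector $\tilde w\in u^\perp$ is mapped by $d\exp_p$ to $\tfrac1r J(r)$, where $J$ is the Jacobi field with $J(0)=0,\ J'(0)=\tilde w$. The hypothesis $K_{\mathbb{M}}\le k$ lets me compare with the constant curvature model $\mathbb{M}(k)$: Rauch's theorem gives $\|J(r)\|\ge\sin_\delta r\,\|\tilde w\|$, hence $\|(d\exp_p)_{ru}\tilde w\|\ge\tfrac{\sin_\delta r}{r}\|\tilde w\|$ and therefore $\|(d\exp_p)^{-1}(E_a^\perp)\|\le\tfrac{r}{\sin_\delta r}\|E_a^\perp\|$. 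Since $u$ and the image of $E_a^\perp$ are orthogonal in $T_p\mathbb{M}$, this yields
$$
\|(d\exp_p)^{-1}(E_a)\|^2\le (E_a^r)^2+\frac{r^2}{\sin_\delta^2 r}\,\|E_a^\perp\|^2 .
$$
The assumption $R<\pi/\delta$ in the case $K\le\delta^2$ enters exactly here: it keeps $q$ short of the first conjugate point of $p$, so that $d\exp_p$ is invertible along the geodesic and Rauch applies; for $K\le0$ this is automatic by Cartan--Hadamard.

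Summing over $a$ and using $\sum_a(E_a^r)^2=\sin^2\theta$ and $\sum_a\|E_a^\perp\|^2=n-\sin^2\theta$, where $\cos\theta=\langle\partial_r,\eta\rangle$, gives the sharper inequality
$$
\sum_{i=1}^{n+1}\|\nabla^M x_i(q)\|^2\le \sin^2\theta+\frac{r^2}{\sin_\delta^2 r}\,(n-\sin^2\theta).
$$
To reach the stated bound I then want to absorb the radial term, i.e. to replace the coefficient $1$ of $\sin^2\theta$ by $\tfrac{r^2}{\sin_\delta^2 r}$; this is legitimate exactly when $\tfrac{r^2}{\sin_\delta^2 r}\ge 1$, that is when $\sin_\delta r\le r$. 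Since $\sin_\delta r\le r$ precisely for $K\le\delta^2$ and $K\le0$, in those cases every summand is at most $\tfrac{r^2}{\sin_\delta^2 r}\|E_a\|^2$ and the $n$ of them add up to $\tfrac{n\,r^2}{\sin_\delta^2 r}$, with equality iff each $E_a$ is tangent to $S(r)$, i.e. iff $\eta=\partial_r$ and $M$ is a geodesic sphere.

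The step I expect to be the main obstacle is this very control of the radial direction. The Gauss lemma forces $(d\exp_p)^{-1}$ to act as an isometry radially, so the radial part of each $E_a$ contributes a rigid $1$ that no curvature comparison can shrink; it can only be dominated when $r\ge\sin_\delta r$. This is why the argument closes cleanly for $K\le\delta^2$ and $K\le0$ but genuinely degrades for $K\le-\delta^2$, where $\sin_\delta r>r$ makes the radial contribution too large to hide --- which is presumably the source of the extra tangential term $\int_M\|\nabla^M\sin_\delta r\|^2$ appearing in Theorem \ref{thm2}. I would therefore keep the sharper inequality above (retaining the angle $\theta$) rather than discard it, so that the same computation can be fed into each of the three theorems.
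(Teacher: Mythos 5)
Your argument is essentially the paper's own: both reduce $\sum_i\|\nabla^M x_i\|^2$ to $\sum_l\|(d\exp_p)^{-1}e_l\|^2$ via the observation that $\langle\nabla x_i,e_l\rangle$ is the $i$-th normal coordinate of the preimage vector, and both then bound this by writing $e_l=\frac1r J_l(r)$ for a Jacobi field with $J_l(0)=0$ and invoking Rauch against the model $\mathbb{M}(k)$. Where you differ is that you split each $e_l$ into radial and spherical parts before comparing, and this extra care is not cosmetic: the paper applies the estimate $\|J_l(t)\|\ge\sin_\delta t\,\|J_l'(0)\|$ to Jacobi fields whose initial derivative may have a radial component, whereas Rauch only controls the normal part; the radial part of $J_l$ grows exactly linearly, contributing $a^2t^2$ rather than $a^2\sin_\delta^2 t$ to $\|J_l(t)\|^2$. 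As you observe, this is harmless precisely when $t\ge\sin_\delta t$, i.e.\ for $K\le\delta^2$ and $K\le 0$, so your proof of the stated bound in those cases is complete and correct. For $K\le-\delta^2$ your diagnosis is right and the lemma as stated is actually false for a general hypersurface: already in $\mathbb{M}=\mathbb{M}(-\delta^2)$, if $\partial_r\in T_qM$ one gets $\sum_l\|\bar e_l\|^2=1+(n-1)\frac{r^2}{\sin_\delta^2 r}>\frac{n r^2}{\sin_\delta^2 r}$. The reason this does not sink the paper is the point you half-notice at the end: in the proofs of Theorems \ref{thm1}--\ref{thm3} the lemma is only ever invoked either for the geodesic sphere $S(r)$ centered at $p$ (where every tangent vector is spherical, $\theta=0$, and your sharper inequality reduces to the stated one) or for $M$ itself in the case $K\le 0$ (where $\sin_\delta r=r$). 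So your refined inequality $\sum_i\|\nabla^M x_i\|^2\le\sin^2\theta+\frac{r^2}{\sin_\delta^2 r}(n-\sin^2\theta)$ is the statement that is both true in all three curvature regimes and sufficient for the applications, and keeping the angle $\theta$ explicit, as you propose, is the correct way to state the lemma.
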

\begin{proof}
Let $q \in M$ and $(e_1, ... , e_n)$ be an orthonormal basis of $T_qM$. Then
$$
\sum_{i=1}^{n+1}\parallel \nabla^Mx_i \parallel^2  = \sum_{i=1}^{n+1}\sum_{l=1}^n \innprod{\nabla^Mx_i}{e_l}^2 = 
\sum_{i=1}^{n+1}\sum_{l=1}^n \innprod{\nabla x_i}{e_l}^2.
$$
Let $e_l = d(exp_p)\bar{e_l}. $
Note that $\innprod{\nabla x_i}{e_l} = e_l(x_i) = \bar{e_l}(x_i\circ exp_p)$ is the $i$-th component of 
$\bar{e_l}$ in the geodesic normal coordinate at $p$. Thus 
$$
\sum_{i=1}^{n+1}\innprod{\nabla x_i}{e_l}^2 = \parallel \bar{e_l} \parallel^2.
$$
Consider a unit speed geodesic $\gamma$ in $\mathbb{M}$ such that $\gamma(0) = p$ and $\gamma(r) = q$. Let 
$J_l$ be the Jacobi field along $\gamma$ such that 
$J_l(0) = 0 $ and $J_l^{\prime}(0) = \bar{e_l}$. Then $e_l = d(exp_p)\bar{e_l} = \frac{1}{r}J_l(r).$ 
Let $\mathbb{M}(k)$ be the space form with constant curvature $k = \pm \delta^2$ or $0$. Fix a point $p_0 \in \mathbb{M}(k)$
and a unit speed geodesic $\bar{\gamma}$ such that $\bar{\gamma}(0) =  p_0$.
Let $\bar{u}$ be a unit vector at $p_0$ and $E(t)$ be the vector 
field obtained by parallel translating $\bar{u}$ along $\bar{\gamma}(t)$. Consider the Jacobi field 
$$
J_{\delta}(t) = \sin_{\delta}t\parallel J_l^{\prime}(0) \parallel E(t)
$$ 
along $ \bar{\gamma}$. By the Rauch comparison theorem 
$$
\parallel J_{\delta}(t) \parallel \leq \parallel J_l(t) \parallel \ \ \ \  \text{for} \ \ \  
\begin{cases}
 0 \leq t < \frac{\pi}{\delta}  & if \  K \leq \delta^2 \\
 t \geq 0 & if \  K \leq -\delta^2 \, \, \text{or} \, \, 0. 
\end{cases}
$$
Hence $\parallel e_l \parallel^2  = \frac{1}{r^2}\parallel J_l(r) \parallel^2 \geq \frac{1}{r^2} \parallel J_{\delta}(r) \parallel^2 
 = \frac{\sin_{\delta}^2r}{r^2}\parallel J_l^{\prime}(0) \parallel^2,$ which implies
$$
\parallel \bar{e_l} \parallel^2 = \parallel J_l^{\prime}(0) \parallel^2 \leq \frac{r^2}{\sin_{\delta}^2r}.
$$
Thus we get 
$$
\sum_{i=1}^{n+1}\parallel \nabla^Mx_i(q) \parallel^2  = \sum_{i=1}^{n+1}\sum_{l=1}^n \innprod{\nabla x_i}{e_l}^2 
= \sum_{l=1}^n \parallel \bar{e_l} \parallel^2 \leq \frac{n\,d(p, q)^2}{\sin_{\delta}^2d(p, q)}.
$$  
\end{proof}
\begin{proof}[Proof of theorem \ref{thm1}]
 We first consider the case $0 \leq K_{\mathbb{M}} \leq \delta^2$. Recall the inequality \eqref{4.4} 
\begin{eqnarray*}
\lambda_1(M)\int_Mf^2 dm & \leq & \int_M \parallel \nabla^Mf \parallel^2dm + \int_M \frac{f^2}{r^2}\sum_{i=1}^{n+1}\parallel \nabla^{S_r}x_i \parallel^2dm \\
& & -\int_Mf^2 \sum_{i=1}^{n+1}\left(\frac{\partial f_i}{\partial \eta}\right)^2dm.
\end{eqnarray*}
Applying lemma \ref{l2} for the geodesic sphere $S_r$, we get $\sum_{i=1}^{n+1}\parallel \nabla^{S_r}x_i \parallel^2 \leq \frac{n\,r^2}{\sin_{\delta}^2r}. $
Substituting this in the above inequality, we get 
\begin{eqnarray}\nonumber
\lambda_1(M)\int_M\sin_{\delta}^2d(p, q) dm(q) & \leq & n\,Vol(M) +\int_M \parallel \nabla^M\sin_{\delta}d(p, q) \parallel^2dm(q)   
\\ 
& &-\int_M\sin_{\delta}^2d(p,q)\sum_{i=1}^{n+1}\left(\frac{\partial f_i}{\partial \eta}(q)\right)^2dm(q)\label{4.6}.
\end{eqnarray}
We now estimate $\sum_{i=1}^{n+1}\left(\frac{\partial f_i}{\partial \eta}(q)\right)^2$ for $q \in M$.

For $q \in M,$ let $\eta(q) = a\partial r(q) + bv$ where $v\in T_q\mathbb{M},\parallel v \parallel = 1, \innprod{\partial r(q)}{v} = 0$.
Note that $b^2 = \parallel \nabla^Mr(q) \parallel^2$ and $\innprod{\nabla f_i}{\eta}(q) = 
a \innprod{\nabla f_i}{\partial r}(q) + b\innprod{\nabla f_i(q)}{v}$. But $\nabla f_i = \frac{\nabla x_i}{r} - \frac{x_i}{r^2}\partial r$. Thus
$\innprod{\nabla f_i}{\eta}(q) = \frac{b}{r}\innprod{\nabla x_i(q)}{v}$. Let $ v = d(exp_p)\bar{v} = \frac{1}{r}J(r)$ where
$J$ is the Jacobi field along the unit speed geodesic $\gamma_q$ which joins $p$ and $q$ such that 
$J(0) = 0 $ and $J^{\prime}(0) = \bar{v}$. Note that$\innprod{\nabla x_i(q)}{v} = v(x_i) = \bar{v}(x_i\circ exp_p)$ is the $i$-th component of 
$\bar{v}$ in the geodesic normal coordinate at $p$. Thus 
\begin{equation*}
\sum_{i=1}^{n+1}\innprod{\nabla x_i(q)}{v}^2 = \parallel \bar{v} \parallel^2. 
\end{equation*}
Consider $\mathbb{R}^{n+1}$ and fix a point $p_0 \in \mathbb{R}^{n+1}$. Let $\alpha$ be a unit speed 
geodesic such that $\alpha(0) =  p_0$. Let $\bar{u}$ be a unit vector at $p_0$ and $W(t)$ be the vector 
field obtained by parallel translating $\bar{u}$ along $\alpha(t)$. Consider the Jacobi field along $J_0(t)$ 
along $\alpha(t)$ given by
$$J_0(t) = \parallel J^{\prime}(0) \parallel \,t W(t).
$$
By Rauch comparison theorem we see that 
$
\parallel J(t) \parallel \leq \parallel J_0(t) \parallel \, \text{for} \, t \, < inj(\mathbb{M}). 
$
Thus 
$$
1 = \parallel v \parallel^2 = \frac{1}{r^2}\parallel J(r) \parallel^2 \leq \frac{1}{r^2} \parallel J^{\prime}(0) \parallel^2 r^2
= \parallel \bar{v} \parallel^2
$$
which gives
\begin{equation*}
\sum_{i=1}^{n+1}\innprod{\nabla f_i(q)}{\eta(q)}^2 =\frac{b^2}{r^2} \sum_{i=1}^{n+1}\innprod{\nabla x_i(q)}{v}^2 = \frac{b^2}{r^2}\parallel \bar{v} \parallel^2 
\geq \frac{b^2}{r^2}.
\end{equation*}
Substituting this, \eqref{4.6} becomes  
\begin{eqnarray*}
\lambda_1(M)\int_M\sin_{\delta}^2d(p, q) dm(q) & \leq & n\,Vol(M) + \int_M \parallel\nabla^M\sin_{\delta}d(p, q)\parallel^2dm(q) \\
&  & -\int_M\sin_{\delta}^2d(p, q)\frac{\parallel \nabla^Mr \parallel^2}{r^2}dm(q).
\end{eqnarray*}
Also we have $\parallel \nabla^M\sin_{\delta}r \parallel^2 = \parallel \nabla^Mr \parallel^2\cos_{\delta}^2r $. Hence 
\begin{equation*}
 \lambda_1(M)\int_M\sin_{\delta}^2d(p, q) dm \leq n\,Vol(M) + \int_M  \parallel \nabla^Mr \parallel^2 \left(\cos_{\delta}^2r - 
\frac{\sin_{\delta}^2r}{r^2}\right)dm(q).
\end{equation*}
As $\tan{\delta}\,r = \frac{\sin{\delta}r}{\cos{\delta}r} \geq \delta\,r,$ for $0 \leq r < \frac{\pi}{\delta},$ we get
\begin{equation*}
 \lambda_1(M)\int_M\sin_{\delta}^2d(p, q) dm \leq n\,Vol(M). 
\end{equation*}
Now by lemma \ref{lm1}, $\int_M\sin_{\delta}^2d(p, q) dm \geq Vol(S_{\delta}(R))\sin_{\delta}^2R $. Substituting this in the above inequality we get, 
$$
 \lambda_1(M) \leq \frac{n}{\sin_{\delta}^2R}\left(\frac{Vol(M)}{Vol(S_{\delta}(R))}\right).
$$
Since $\frac{n}{\sin_{\delta}^2R} = \lambda_1(S_{\delta}(R))$, we have 
\begin{equation}\label{4.7}
 \lambda_1(M) \leq \lambda_1(S_{\delta}(R))\left(\frac{Vol(M)}{Vol(S_{\delta}(R))}\right)
\end{equation}
which is the desired inequality.

As $\tan\delta \,r > \delta r$ for $0 < r < \frac{\pi}{\delta},$ the equality holds if and only if
$\parallel \nabla^Mr \parallel = 0$ and the equality in lemma(\ref{lm1})
holds. This shows that equality in \eqref{4.7} holds if and only if
$M$ is geodesic sphere in $\mathbb{M}$ and $\Omega$ is isometric to $B_{\delta}(R)$.

Next consider the case $K_{\mathbb{M}} \leq 0$. Notice that $g_i = x_i$ for $i=1, ..., n+1$. Hence \eqref{4.1} can be written as 
\begin{equation}
\lambda_1(M)\int_M\sum_{i=1}^{n+1}x_i^2 dm  \leq  \int_M\sum_{i=1}^{n+1}\parallel \nabla^M x_i \parallel^2dm \label{4.8}.
\end{equation}
By lemma \ref{l2} we get 
\begin{equation*}
 \sum_{i=1}^{n+1}\parallel \nabla^M x_i \parallel^2 \leq n.
\end{equation*}
Also by lemma \ref{lm1}, we have
$$\int_M\sum_{i=1}^{n+1}x_i^2 dm = \int_Mr^2 dm \geq R^2\,Vol(S(R)).
$$
Substituting these into \eqref{4.8} and using the fact that $\frac{n}{R^2} = \lambda_1(S(R))$, we get the desired inequality
$$
\lambda_1(M) \leq \lambda_1(S(R))\left(\frac{Vol(M)}{Vol(S(R))}\right).
$$
Further, the equality condition follows from the equality condition in the lemma \ref{lm1}. 
\end{proof}
\begin{proof}[Proof of theorem \ref{thm2}]
The proof in this case is similar to the proof of theorem \ref{thm1} except that we do not have a similar estimate of 
\begin{equation*}
\int_M \parallel \nabla^M\sin_{\delta}d(p, q) \parallel^2dm(q)   
-\int_M\sin_{\delta}^2d(p,q)\sum_{i=1}^{n+1}\left(\frac{\partial f_i}{\partial \eta}(q)\right)^2dm(q).
\end{equation*}
Thus from \eqref{4.6}, 
\begin{equation*}
\lambda_1(M)\int_M\sin_{\delta}^2d(p, q) dm(q)  \leq  n\,Vol(M) +\int_M \parallel \nabla^M\sin_{\delta}d(p, q) \parallel^2dm(q).
\end{equation*}
By lemma \ref{lm1}, we get the desired inequality 
\begin{equation}\label{4.9}
\frac{\lambda_1(M)}{\lambda_1(S_{\delta}(R))} \leq \frac{Vol(M)}{Vol(S_{\delta}(R))}\ +
\frac{1}{n\,Vol(S_{\delta}(R))}\int_M \parallel \nabla^M\sin_{\delta}r\parallel^2. 
\end{equation}

The equality holds if and only if the equality in lemma \ref{lm1} holds and 
$\frac{\partial f_i}{\partial \eta}(q) = 0$ for all $ i = 1, ... , n+1 $ and for all points $q \in M$. The later is true if and only if 
$\eta(q) = \partial r(q)$ for all points $ q \, \in M,$ which implies that $M$ is a geodesic sphere. Thus the equality in \eqref{4.9} holds if and 
only if $M$ is a geodesic sphere and $\Omega$ is isometric to $B_{\delta}(R)$.  
\end{proof}
\begin{proof}[Proof of theorem \ref{thm3}]
We recall the inequality \eqref{4.5},
\begin{eqnarray*}
 \lambda_1(M)\int_M\sinh^2rdm & \leq & \int_M \parallel \nabla^M\sinh\,r\parallel^2dm + \int_M \sinh^2r\,\lambda_1(S(r))\,dm \\
 & & -\int_M\sum_{i=1}^{kn}\left(\frac{\partial f_i}{\partial \eta}\right)^2\,dm.
\end{eqnarray*}
Substituting for $\lambda_1(S(r))$ in the above inequality we get
\begin{eqnarray*}
 \lambda_1(M)\int_Mf^2 dm & \leq & (kn-1)Vol(M) - (k-1)\int_M tanh^2rdm \\
& & + \int_M \parallel \nabla^M\sinh\,r \parallel^2dm. 
\end{eqnarray*}
By lemma \ref{lm1} and lemma \ref{lm2}, we get 
%\begin{eqnarray*}
 \begin{align*}
 \lambda_1(M)Vol(S(R))\sinh^2R  & \, \leq \, (kn-1)Vol(M) -(k-1)\tanh^2R\,Vol(S(R))\\
  & \quad \ \, + \int_M \parallel \nabla^M\sinh\,r \parallel^2\,dm. 
\vspace{-.3cm} 
\end{align*}
%\end{eqnarray*}
Now suppose that $k = 1$. Then above inequality reduces to
%\vspace{-.3cm}
\begin{align*}
 \lambda_1(M)Vol(S(R))\sinh^2R  & \, \leq \, (n-1)Vol(M) + \int_M \parallel \nabla^M\sinh\,r \parallel^2\,dm. 
%\vspace{-.3cm} 
\end{align*}
Using the fact that $\lambda_1(S(r)) = \frac{n-1}{\sinh^2r}$ for all $r>0$, we get the required result 
%\vspace{-.3cm}
\begin{equation}
\frac{\lambda_1(M)}{\lambda_1(S(R))} \leq \frac{Vol(M)}{Vol(S(R))} + \frac{1}{(n-1)Vol(S(R))}\int_M \parallel \nabla^M\sinh\,r 
\parallel^2 \label{hyp}
%\vspace{-.3cm}
\end{equation}
for hypersurfaces in $\mathbb{H}^n$.

When $k > 1 $, we get
%\vspace{-.3cm}
\begin{eqnarray}\nonumber
  \lambda_1(M) & \leq & \left(\frac{kn-1}{\sinh^2R} - \frac{k-1}{\cosh^2R}\right)\frac{Vol(M)}{Vol(S(R))}\\ \nonumber
& & + \frac{1}{Vol(S(R))}\left(\frac{k-1}{\cosh^2R}Vol(M) + \frac{1}{\sinh^2R}\int_M\! \parallel \!\nabla^M\sinh\,r \!\parallel^2\right)\\ \nonumber
 & = & \lambda_1(S(R))\left(\frac{Vol(M)}{Vol(S(R))}\right) + \frac{k-1}{\cosh^2R}\left(\frac{Vol(M)}{Vol(S(R))}\right) \\
& & + \ \frac{1}{\sinh^2R\,Vol(S(R))}\!\int_M\! \parallel \!\nabla^M\sinh\,r \!\parallel^2. \label{nonhyp}
%\vspace{-.8cm}
\end{eqnarray}

The equality in \eqref{hyp} and in \eqref{nonhyp} follows from the equality criterion in lemma \ref{lm1} and lemma \ref{lm2} and the fact that  
$\frac{\partial f_i}{\partial \eta}(q) = 0$ for all $ i = 1, ... , kn $ and for all points $q \in M$ happens if and only if $M$ is a geodesic sphere.
\end{proof}
\begin{remark}
 In the case of $\mathbb{H}^n$, a Jacobi field computation gives 
$$
\sum_{i=1}^{kn}\left(\frac{\partial f_i}{\partial \eta}\right)^2 = \frac{1}{\sinh^2r}\parallel \nabla^Mr \parallel^2.
$$
This implies that
$$
\int_M \parallel \nabla^M\sinh\,r \parallel^2  - \int_M f^2 \sum_{i=1}^{kn}\left(\frac{\partial f_i}{\partial \eta}\right)^2
= \int_M \parallel \nabla^Mcosh\,r \parallel^2.
$$
Thus \eqref{hyp} becomes
$$
\frac{\lambda_1(M)}{\lambda_1(S(R))} \leq \frac{Vol(M)}{Vol(S(R))} + \frac{1}{(n-1)Vol(S(R))}\int_M \parallel \nabla^Mcosh\,r \parallel^2.
$$
\end{remark}


\begin{thebibliography}{99}
\bibitem{bw}
D.~Bleecker and J.~Weiner.:
\newblock Extrinsic bounds on $\lambda_1$ of $\Delta$ on a compact manifold.
\newblock {\em Comment. Math. Helv.}, 51:601--609, 1976.

\bibitem{ic}
I. Chavel.:
\newblock Eigenvalues in Riemannian Geometry,
\newblock Academic Press, Inc. 1984.

\bibitem{dc} 
M.P do Carmo.:
\newblock Riemannian Geometry, 
\newblock Birkh$\ddot{a}$user Boston 1992.

\bibitem{jhe}
J. H. Eschenburg.:
\newblock Comparison theorems and hypersurfaces.
\newblock {\em Manuscripta Math.}, 59:295--323, 1987.

\bibitem{gjf}
Grosjean~J F.:
\newblock Upper bounds for the first eigenvalue of the laplacian on compact
  submanifolds.
\newblock {\em Pacific. J. Math.}, 206:93--112, 2002.

\bibitem{ghl}
S. Gallot, D. Hulin and J. Lafontaine.:
\newblock Riemannian Geometry. 
\newblock Third edition. Springer, 2004.

\bibitem{eh}
E. Heintze.:
\newblock Extrinsic upperbounds for $\lambda_1$.
\newblock {\em Math. Ann.}, 280:389--402, 1988.

\bibitem{mtw}
J. E. Marsden, A. J. Tromba, A. Weinstein.:
\newblock Basic Multivariable Calculus.
\newblock Springer India Pvt. Ltd, 2009.

\bibitem{r}
R.Reilly.:
\newblock On the first eigenvalue of the laplacian for compact submanifold of
  euclidean space.
\newblock {\em Comment. Math. Helv.}, 52:525--533, 1977.

\bibitem{gs}
G.~Santhanam.:
\newblock A sharp upperbound for the first eigenvalue value of the laplacian of
  compact hypersurfaces in rank-1 symmetric spaces.
\newblock {\em Proc. Indian Acad. Sci. (Math. Sci)}, 117, No. 3:307--315,
  August 2007.

\bibitem{gs1}
G. Santhanam.:
\newblock Isoperimetric upper bounds for the first eigenvalue.
\newblock {\em Preprint.}

\bibitem{tf} 
G. B. Thomas, JR and R. L. Finney.:
\newblock Calculus and Analytical Geometry.
\newblock Addison-Wesley, $9^{th}$edition.
 
\end{thebibliography}
\end{document}